\let\emptyset\varnothing
\title{A simple proof of the non-uniform Kahn-Kalai conjecture}
\author{Bryan Park \and Jan Vondr\'ak}
\date{\today}
\begin{document}

\maketitle

\newtheorem{theorem}{Theorem}
\newtheorem{problem}{Problem}
\newtheorem{lemma}{Lemma}
\newtheorem{definition}{Definition}
\newtheorem{example}{Example}
\newtheorem{observation}{Observation}
\newtheorem*{answer}{Answer}
\newtheorem{corollary}{Corollary}
\newtheorem{proposition}{Proposition}
\newtheorem*{remark}{Remark}

\newcommand{\Var}{\operatorname{Var}}
\newcommand{\E}{\mathbb{E}}
\newcommand{\N}{\mathbb{N}}
\newcommand{\G}{\mathcal{G}}
\newcommand{\F}{\mathcal{F}}
\newcommand{\FM}{\mathcal{F}^*}
\renewcommand{\H}{\mathcal{H}}
\renewcommand{\S}{\mathcal{S}}
\renewcommand{\L}{\mathcal{L}}
\renewcommand{\P}{\mathbb{P}}
\newcommand{\R}{\mathbb{R}}
\newcommand*{\vv}[1]{\mathbf{\mkern0mu#1}}

\begin{abstract}
We revisit the Kahn-Kalai conjecture, recently proved in striking fashion by Park and Pham \cite{PP}, and present a slightly reformulated simple proof which has a few advantages: (1) it works for non-uniform product measures, (2) it gives near-optimal bounds even for sampling probabilities close to $1$, (3) it gives a clean bound of $p_c \leq 4q_c \log_2 (7\ell)$ for every $\ell$-bounded set system, $\ell\geq 1$. % improving currently known bounds in the range of $\ell < 2^{10^9}$.   (CHECK THIS!! :-))
\end{abstract}

\section{Introduction}

 Let $X$ be a finite set and $|X|=n.$ Let $\vv{p}\in (0,1)^X$ and let $\mu_{\vv{p}}$ denote the product measure on $2^X$ given by  $\mu_{\vv{p}}(S)=\prod_{x\in S}p_x\prod_{y\notin S}(1-p_y).$
 Let $X_{\vv{p}}$ denote a random set sampled from the distribution $\mu_{\vv{p}}.$ 
 Given a family $\F \subseteq 2^X$ which is \textit{non-trivial} ($\F\neq\emptyset,2^X$) and \textit{up-monotone} (if $A \in \F$ and $A \subseteq B$ then $B \in \F$), we are interested in the probability that $X_{\vv p}\in \F$. The Kahn-Kalai conjecture \cite{KK} concerns the comparison between two notions of a threshold for such an event. Stating the conjecture requires a bit of notation which we adopt from \cite{PP}.

\begin{definition}[$\ell$-bounded]
    \normalfont
    $\G\subseteq 2^X$ is $\ell$-\textit{bounded} if $|S|\leq \ell$ for all $S\in\G.$
\end{definition}

\begin{definition}[cover]
    \normalfont
    Given $\G\subseteq 2^X,$ let $\langle \G\rangle:=\bigcup_{S\in \G}\{T: T\supseteq S\}$. If $\H \subseteq \langle \G \rangle$, we say that $\G$ \textit{covers} $\H$.
\end{definition}

\begin{definition}[$\vv q$-cost, $\vv q$-small]
\normalfont 
    Given $\H\subseteq 2^X$ and $\vv{q} \in (0,1)^X$, let $e_{\vv{q}}(\H) := \sum_{S \in \H} \prod_{x \in S} q_x$. Equivalently, $e_{\vv q}(\H)=\E[|\{S\in \H: X_{\vv q}\supseteq S\}|]$. Let $c_{\vv{q}}(\H) := \min\{e_{\vv{q}}(\G)\mid \H\subseteq \langle \G\rangle\}$.
    We say that $c_{\vv q}(\H)$ is the \textit{$\vv q$-cost} of $\H$ and $\H$ is \textit{$\vv q$-small} if $c_{\vv q}(\H) \leq 1/2$.
\end{definition}

In the uniform setting, we have a single probability $p \in (0,1)$ instead of $\vv{p} \in (0,1)^X$ and all the notions above are considered with the uniform probability vector $\vv{p} = (p,p,\ldots,p)$. 
Given a non-trivial, up-monotone family $\F \subseteq 2^X$, we can compare two thresholds: The \textit{probability threshold} $p_c := \sup \{p: \Pr[X_p \in \F] \leq 1/2 \}$, and the \textit{expectation threshold} $q_c := \sup \{q: \F \mbox{ is }   q\mbox{-small}\}$. It is easy to verify that $q_c \leq p_c$. Kahn and Kalai \cite{KK} conjectured that the ratio between the two thresholds is always $O(\log n) = O(\log |X|)$, which implies several difficult results in probabilistic combinatorics. This conjecture remained open until last year when Park and Pham found a shockingly elegant proof \cite{PP}.

In this note, we give a reformulation and simplification of their proof, which yields the following.\footnote{All logarithms in this paper are base 2.}

\begin{theorem}
    Let $L=4,$ $\ell\geq 1,$ $\vv q \in (0,1)^X$, and $p_x=1-(1-q_x)^{L\lfloor \log(2\ell) \rfloor+7}$ for $x\in X$. For any $\ell$-bounded $\H\subseteq 2^X$, if $c_{\vv q}(\H) > 1/2,$ then $\Pr[X_{\vv{p} }\in\langle\H\rangle] > 1/2$.
\end{theorem}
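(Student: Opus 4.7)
My plan is to prove the contrapositive by induction on $\lfloor\log(2\ell)\rfloor$: assuming $\Pr[X_{\vv p}\in\langle\H\rangle]\le 1/2$, I will exhibit a cover $\G$ with $\H\subseteq\langle\G\rangle$ and $e_{\vv q}(\G)\le 1/2$, thereby showing $c_{\vv q}(\H)\le 1/2$. The essential tool is the exponent coupling $1-p_x=(1-q_x)^k$ with $k=L\lfloor\log(2\ell)\rfloor+7$, which realizes $X_{\vv p}$ as a union $Y_1\cup\cdots\cup Y_k$ of $k$ i.i.d.\ $\vv q$-samples. These are organized as $\lfloor\log(2\ell)\rfloor$ ``halving blocks'' of size $L=4$ plus a reserve of $7$ samples, used only at the base case.

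For the base case $\ell=1$ (so $k=11$), $\H$ is a family of singletons $\{x\}$ and the hypothesis reads $\sum_{x\in H}q_x>1/2$; a direct estimate $\Pr[X_{\vv p}\cap H=\emptyset]=\prod_{x\in H}(1-q_x)^{11}\le e^{-11/2}<1/2$ closes this case. For the inductive step ($\ell\to \lfloor\ell/2\rfloor$), set $T=Y_1\cup\cdots\cup Y_L$ and following the Park--Pham paradigm split each $S\in\H$ into a \emph{good} part ($|S\setminus T|\le\lfloor\ell/2\rfloor$) and a \emph{bad} part. A Chernoff-type estimate on $|S\setminus T|$, exploiting that each $x\in S$ lies in $T$ with probability $1-(1-q_x)^L$, bounds the expected $\vv q$-cost of the random bad family by a small constant, and this is where the choice $L=4$ is calibrated.

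The main obstacle is the inductive bookkeeping. I will need to argue that, conditional on $X_{\vv p}\notin\langle\H\rangle$, either the random bad family already supplies a cover of $\H$ with $\vv q$-cost $\le 1/2$, or else the residual family $\H'=\{S\setminus T : S\in\H \text{ good}\}$ on $X\setminus T$ inherits $c_{\vv q}(\H')>1/2$. In the latter case the inductive hypothesis, applied to the $\lfloor\ell/2\rfloor$-bounded $\H'$ with the remaining $k-L$ samples (whose exponent precisely matches the threshold for the reduced parameter), furnishes a cover $\G'$ of $\H'$; lifting $\G'$ back through $T$ and combining with the bad-family cover yields a cover of $\H$ with total $\vv q$-cost $\le 1/2$. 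The additive slack $7$ in the exponent of $\vv p$ is tuned so that the constant losses from the Chernoff step at each level and from the base case all fit within this $1/2$ budget.
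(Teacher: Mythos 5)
Your plan has the right high-level shape --- halving blocks, splitting by the size of $S\setminus T$, a base case handled by a direct estimate --- but two steps are genuinely unaccounted for, and they are exactly where the real work is.

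First, the ``Chernoff-type estimate on $|S\setminus T|$'' does not bound the $\vv q$-cost of the bad family. A per-set concentration estimate, summed over $S\in\H$, scales with $|\H|$, which can be exponentially large; that sum has no reason to be a small constant. What is actually needed (the paper's Lemma 1) is the Park--Pham double-counting: after rewriting $e_{\vv q}(\L_m(\H,W))$ via the substitution $Z=W\cup S$, one observes that for any $Z$ containing a witness $T_Z\in\H$, \emph{every} minimal fragment of the form $S\in\FM(\H,Z\setminus S)$ with $S\subseteq Z$ must satisfy $S\subseteq T_Z$. This collapses the inner sum to one over subsets of a single set of size $\leq\ell$, and --- crucially --- produces a factor of $\Pr[W\in\langle\H\rangle]$ in front. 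None of this is a Chernoff bound, and nothing in your sketch recovers it. Second, the inductive bookkeeping does not close at a fixed threshold $1/2$. Even if the induction hypothesis applied cleanly to the residual family $\H'=\S_m(\H,T)$ gave $c_{\vv q}(\H')\le 1/2$, you would conclude $c_{\vv q}(\H)\le 1/2 + c_{\vv q}(\L_m(\H,T)) > 1/2$, so the threshold degrades level by level. The dichotomy you state (``either the bad family already supplies a cover with cost $\le 1/2$, or $c_{\vv q}(\H')>1/2$'') is not valid: the two costs add, they don't trade off in that either/or way. Moreover, your plan conditions on the event $X_{\vv p}\notin\langle\H\rangle$, which distorts the distribution of $T$ and would invalidate the very concentration estimate you rely on. The paper's proof sidesteps all of this: it never conditions on $X_{\vv p}\notin\langle\H\rangle$, never applies a fixed $1/2$ threshold inside the recursion, but instead accumulates all the level errors into a single random variable $Z=\sum_i c_{\vv q}(\L_{2^{k-i}}(\H_{i-1},W_i))$, shows $c_{\vv q}(\H_k)\geq c_{\vv q}(\H)-Z$ by subadditivity, bounds $\E[Z]$ with the $\Pr[W\in\langle\H\rangle]$ factor from Lemma 1 (plus Lemma 2 for the $1$-bounded final step), and finishes with a single application of Markov's inequality. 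The self-referential factor $\Pr[W\in\langle\H\rangle]$ in the expectation bound is what makes the numbers come out, and it has no counterpart in your outline.
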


A natural interpretation of Theorem 1 is that if $\H$ is not $\vv{q}$-small, then a union of $O(\log \ell)$ independent copies of $X_{\vv{q}}$ is in $\langle \H \rangle$ with constant probability.
In particular, since $p_x \leq q_x(4\log(2\ell)+7)$ for $x\in X$, Theorem 1 implies that $p_c \leq q_c (4\log (2 \ell)+7)\leq 4q_c\log(7\ell)$, i.e.~the uniform Kahn-Kalai conjecture. 
However, Theorem 1 also gives a meaningful result when the $q_x$'s are non-uniform, and possibly close to $1$. 

We remark that the best known multiplicative constant is $L \approx 3.995$ \cite{Huy,VT}. However this comes with a rather large additive term, and we do not pursue this bound here. (It could be obtained with some additional work, as our method is fundamentally the same as \cite{PP,VT}.) Alternatively, our method also gives the bound $p_c \leq 4.5 q_c \log (2 \ell)$ which strictly improves $p_c \leq 8 q_c \log (2 \ell)$ given in \cite{Bell}. 
 
Our proof has the same structure as the proof of Park and Pham \cite{PP}, and the same key concept of \textit{minimal fragments}. The main difference is that we work directly with random sets under a product measure rather than random sets of a fixed size. As a result, we are able to streamline the argument of \cite{PP} and apply it to non-uniform product measures. We also remark that another way to prove a non-uniform version of the Kahn-Kalai conjecture appears to be an element duplication scheme along the lines of \cite{PR}. However, this would incur additional technicalities and a weaker quantitative bound.

\section{The Proof}

We begin with some preliminaries (adopted from \cite{PP}).

\begin{definition}[fragments and minimal fragments]
    \normalfont
    Given $\H\subseteq 2^X$  and $W\in 2^X,$ any element of $\F(\H,W):=\{S\setminus W\mid S\in \H\}$ is called a \textit{fragment} of $\H$ and $W.$ We denote by $\FM(\H,W)$  the set of (inclusion-wise) minimal elements of $\F(\H,W)$, which we call \textit{minimal fragments}.
\end{definition}

\begin{definition}[large and small minimal fragments]
\normalfont
Given $\H\subseteq 2^X$ and  $W\in 2^X$,  we define
$\L_m(\H,W) := \{ T \in \FM(\H,W): |T| \geq m \}$, the set of ``\textit{large}" minimal fragments. Similarly, we define $\S_m(\H,W):=\FM(\H,W)\setminus \L_m(\H,W) = \{ T \in \FM(\H,W): |T| < m \}$, the set of ``\textit{small}" minimal fragments.
\end{definition}

Moreover, note that ${\vv q}$-cost is subadditive: If $\H=\H_1 \cup \H_2,$ then $c_{\vv q}(\H)\leq c_{\vv q}(\H_1)+c_{\vv q}(\H_2)$, since a union of the respective  covers of $\H_1$ and $\H_2$ is a cover of $\H$.

\subsection{The $\vv{q}$-cost Lemma}
Our proof of Theorem 1 involves the following $\vv{q}$-cost lemma analogous to Lemma 2.1 in \cite{PP}. 
%We show a tighter bound in order to improve the universal constant in Theorem 1.  

\begin{lemma}
Let $L\geq 1$, $\ell \geq m \geq 1$, $\vv{q}\in (0,1)^X,$ and $p_x = 1 - (1-q_x)^L$ for $x\in X$. If $\H\subseteq 2^X$ is $\ell$-bounded, then 
$$\E[c_{\vv q}(\L_m(\H,X_{\vv p}))] \leq \Pr[X_{\vv p} \in \langle \H \rangle] \cdot \sum_{j=m}^{\ell} \frac{1}{L^j} \binom{\ell}{j}.$$
\end{lemma}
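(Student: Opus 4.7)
The plan is to avoid the explicit coupling $X_{\vv p} = Y_1 \cup \cdots \cup Y_L$ used in \cite{PP} and perform the relevant change of measure directly in the expectation over $X_{\vv p}$. First, I would use the trivial cover of $\L_m(\H,W)$ by itself to write
$$c_{\vv q}(\L_m(\H,W)) \le e_{\vv q}(\L_m(\H,W)) = \sum_{T \in \L_m(\H,W)} \prod_{x \in T} q_x,$$
so that
$$\E[c_{\vv q}(\L_m(\H, X_{\vv p}))] \le \sum_W \P[X_{\vv p}=W] \sum_{T \in \L_m(\H,W)} \prod_{x \in T} q_x.$$

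Next, I would perform the substitution $W \mapsto W' := W \sqcup T$, valid since fragments of $(\H,W)$ are disjoint from $W$ by definition. The product-measure identity $\P[X_{\vv p}=W] = \P[X_{\vv p}=W']\prod_{x \in T}(1-p_x)/p_x$ turns the double sum into
$$\sum_{W'} \P[X_{\vv p}=W'] \sum_{\substack{T \subseteq W',\ |T|\ge m \\ T \in \FM(\H, W' \setminus T)}} \prod_{x \in T} \frac{q_x(1-q_x)^L}{1-(1-q_x)^L}.$$
A short calculation---writing $1-(1-q_x)^L = \int_0^{q_x} L(1-t)^{L-1}\,dt \ge Lq_x(1-q_x)^{L-1} \ge Lq_x(1-q_x)^L$---shows that $\frac{q_x(1-q_x)^L}{1-(1-q_x)^L} \le \frac{1}{L}$, so each summand is bounded by $L^{-|T|}$.

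The crux of the argument is a combinatorial characterization of the ``valid'' $T$'s for each fixed $W'$. I would prove that for $T \subseteq W'$, one has $T \in \FM(\H, W' \setminus T)$ if and only if $W' \in \langle \H \rangle$ and $T \subseteq I(W') := \bigcap_{S \in \H,\ S \subseteq W'} S$. In the forward direction, $T$ being a fragment forces some $S \in \H$ with $T \subseteq S \subseteq W'$ (so $W' \in \langle \H \rangle$), and minimality forces $T \subseteq S'$ for every $S' \in \H$ with $S' \subseteq W'$, for otherwise $S' \setminus (W' \setminus T) = S' \cap T \subsetneq T$ would be a strictly smaller fragment. The reverse direction is a direct check.

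Once this characterization is in place, the bound follows immediately: for each $W' \in \langle \H \rangle$ we have $I(W') \subseteq S$ for some $S \in \H$, hence $|I(W')| \le \ell$, so the inner sum over $T \subseteq I(W')$ with $|T| \ge m$ is at most $\sum_{j=m}^{\ell} \binom{\ell}{j} L^{-j}$. Summing over $W' \in \langle \H \rangle$ then recovers the prefactor $\P[X_{\vv p} \in \langle \H \rangle]$. The main obstacle I anticipate is spotting and cleanly verifying the intersection characterization of minimal fragments; once that identity is in hand, the rest is elementary calculation.
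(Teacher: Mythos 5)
Your proof is correct and follows essentially the same route as the paper: the substitution $W \mapsto W' = W \sqcup T$ is exactly the Park--Pham double-counting step, and the binomial sum arises in the same way from $\ell$-boundedness of $\H$. The only cosmetic differences are that you prove a full if-and-only-if characterization of minimal fragments via the intersection $I(W') = \bigcap_{S \in \H,\, S \subseteq W'} S$, whereas the paper simply fixes one witness $T_Z \in \H$ with $T_Z \subseteq W'$ and observes that it already contains every minimal fragment (which is all that is needed, since $|T_Z| \le \ell$), and your integral argument for $\frac{q(1-q)^L}{1-(1-q)^L} \le \frac{1}{L}$ replaces the paper's monotonicity-plus-limit argument.
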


We remark that a version analogous to \cite{PP} would have a slightly looser bound of $2^\ell / L^m$ on the right-hand side. The difference between the two leads to different constants in the main theorem.

\begin{proof}
  In the following, we use the notation $\vv{q}^S:=\prod_{x \in S} q_x$ and $(\vv{1} - \vv{q})^T = \prod_{x \in T} (1-q_x)$. 
  By definition,
    \begin{align*}
    \E[c_{\vv q}(\L_m(\H,X_{\vv p}))] \leq \E[e_{\vv q}(\L_m(\H,X_{\vv p})]& = \sum_{W\in 2^X} \vv{p}^{W} (\vv{1}-\vv{p})^{X \setminus W}
        \sum_{S\in \L_m(\H,W)} \vv{q}^{S}.
    \end{align*}
    The following is the key double-counting trick of Park and Pham:
    Let $Z = W \cup S$ (a disjoint union) and rewrite the summation over $W$ and $S$ as a summation over $Z$ and $S$, where $W = Z \setminus S$:
    \begin{align*}
        \E[c_{\vv q}(\L_m(\H,X_{\vv p})]
        &\leq \sum_{Z \in 2^X} \sum_{\substack{S\subseteq Z,\\S\in \L_m(\H,Z\setminus S)}} {\vv p}^{Z \setminus S}({\vv 1}-{\vv p})^{X \setminus (Z \setminus S)} {\vv q}^{S} \\
        &= \sum_{Z \in 2^X}  {\vv p}^{Z} ({\vv 1}-{\vv p})^{X \setminus Z} \sum_{\substack{S\subseteq Z,\\S\in \L_m(\H,Z\setminus S)}} \frac{{\vv q}^S ({\vv 1}-{\vv p})^{S}}{{\vv p}^S}.
    \end{align*}
    If $Z$ contains no set in $\H$, then there is no $S\subseteq Z$ such that $S \in \L_m(\H,Z \setminus S)$ and thus no contribution to the inner sum. Otherwise, fix some $T_Z\in \H$ such that $T_Z\subseteq Z$. Then, for any $S\subseteq Z$ we have by definition 
    $S \cap T_Z \in \F(\H,Z \setminus S)$. This means that for any $S\subseteq Z$, if $S \in \F^*(\H, Z \setminus S)$, then $S\subseteq T_Z$. In other words, $T_Z$ contains {\em all} the minimal fragments in $\FM(\H, Z \setminus S)$, and hence also all the sets in $\L_m(\H,W)$. Therefore, we have
    \begin{align*}
        \E[c_{\vv q}(\L_m(\H,X_{\vv p})]
        &\leq \sum_{\substack{Z \in 2^X, \\ \exists T_Z \subseteq Z, T_Z \in \H}} {\vv p}^{Z} ({\vv 1}-{\vv p})^{X \setminus Z} \sum_{\substack{S\subseteq T_Z, |S| \geq m}} \frac{{\vv q}^S (1-{\vv p})^{S}}{{\vv p}^S}.
    \end{align*}
    Note that the expression at the end is $$\frac{{\vv q}^S ({\vv 1}-{\vv p})^S}{{\vv p}^S} = \prod_{x \in S} \frac{q_x (1-p_x)}{p_x} = \prod_{x \in S} \frac{q_x (1-q_x)^L}{1 - (1-q_x)^L}.$$
    The function $\frac{q_x (1-q_x)^L}{1-(1-q_x)^L}$ is decreasing for $q_x \in (0,1)$, as can be verified by taking the derivative, and the limiting value as $q_x \to 0$ is $1/L$. Hence, we have
    $\frac{{\vv q}^S ({\vv 1}-{\vv p})^S}{{\vv p}^S} \leq \frac{1}{L^{|S|}}.$
    Summing up over all $S \subseteq T_Z, |S| \geq m$, we obtain
     $\sum_{j=m}^{\ell} \frac{1}{L^j} \binom{\ell}{j}$ since $\H$ is $\ell$-bounded and $T_Z\in\H$. Finally, $\sum_{Z\in 2^X, \exists T_Z \subseteq Z, T_Z \in \H} {\vv p}^{Z} ({\vv 1}-{\vv p})^{X \setminus Z}$ is exactly the probability that $X_{\vv{p}} \in \langle \H \rangle$ as desired.
\end{proof}

In the special case $\ell=m=1,$ we can prove directly the following bound. 
\begin{lemma}
    Let $L\geq 1$, $\ell =m= 1$, $\vv{q}\in (0,1)^X,$ and $p_i = 1 - (1-q_x)^L$ for $x\in X$. If $\H\subseteq 2^X$ is $\ell$-bounded,
$$\E[c_{\vv q}(\L_m(\H,X_{\vv p}))] \leq \frac{1}{eL}.$$
\end{lemma}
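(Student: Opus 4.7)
The plan is to exploit the simple structure of $\ell$-bounded families when $\ell=1$ and argue directly, without invoking the double-counting of Lemma 1. Since $\ell=1$, every element of $\H$ has size at most $1$. If $\emptyset \in \H$, then $\emptyset$ is always a fragment and is the unique minimal one, so $\L_1(\H,W) = \emptyset$ and the bound is trivial. Otherwise $\H = \{\{x\} : x \in A\}$ for some $A \subseteq X$, and I would focus on this case.

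First I would describe $\L_1(\H,W)$ explicitly. If $W \cap A \neq \emptyset$, then some $\{x\} \in \H$ has $\{x\} \setminus W = \emptyset$, making $\emptyset$ the unique minimal fragment, so $\L_1(\H,W) = \emptyset$ and its $\vv q$-cost is $0$. If $W \cap A = \emptyset$, the minimal fragments are exactly the singletons $\{\{x\} : x \in A\}$, which cover themselves, so $c_{\vv q}(\L_1(\H,W)) \leq \sum_{x \in A} q_x$. By independence of the coordinates of $X_{\vv p}$ together with the identity $1 - p_x = (1-q_x)^L$ and the inequality $1-t \leq e^{-t}$,
$$\Pr[X_{\vv p} \cap A = \emptyset] \;=\; \prod_{x \in A} (1-q_x)^L \;\leq\; \exp\Bigl(-L \sum_{x \in A} q_x\Bigr).$$
Combining these observations yields
$$\E[c_{\vv q}(\L_1(\H, X_{\vv p}))] \;\leq\; \Bigl(\sum_{x \in A} q_x\Bigr) \exp\Bigl(-L \sum_{x \in A} q_x\Bigr).$$

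Finally, setting $Q = \sum_{x \in A} q_x \geq 0$, the one-variable function $Q \mapsto Q e^{-LQ}$ is maximized on $[0,\infty)$ at $Q = 1/L$, where its value is $1/(eL)$, yielding the claim. There is no real obstacle here; the argument is a short case split followed by a standard calculus optimization. The improvement over the $1/L$ bound one would get by simply plugging $\ell=m=1$ into Lemma 1 is precisely the factor $e^{-1}$ extracted from the probability that $X_{\vv p}$ misses $A$ entirely.
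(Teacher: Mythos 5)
Your proof is correct and follows essentially the same route as the paper: identify $\L_1(\H,X_{\vv p})$ as either empty (when $X_{\vv p}$ hits $A$) or all of $\H$, bound the $\vv q$-cost by $\sum_{x\in A}q_x$ times the miss probability $\prod_{x\in A}(1-q_x)^L$, apply $1-t\leq e^{-t}$, and optimize $Q\mapsto Qe^{-LQ}$. The only cosmetic difference is your explicit handling of the $\emptyset\in\H$ edge case, which the paper sidesteps by writing $\H=\{\{h_1\},\ldots,\{h_r\}\}$.
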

\begin{proof}
    Since $\H$ is $1$-bounded, we can assume that $\H = \{ \{h_1\},\ldots,\{h_r\} \}$. If $X_{\vv p}$ contains any of the elements $h_i$, then $\FM(\H,X_{\vv p})$ contains the empty set, which is the only minimal fragment and hence $\L_1(\H,X_{\vv p})$ is empty. Otherwise, we see that $\L_1(\H,X_{\vv p})=\H.$ Therefore, we have
    \begin{align*}
        \E[c_{\vv q}(\L_1(\H,X_{\vv p}))] &= c_{\vv q}(\H) \cdot \Pr[X_{\vv p} \cap \{h_1,\ldots,h_r\} = \emptyset]
        \leq \left(\sum_{i=1}^r q_{h_i}\right)\cdot \prod_{i=1}^r (1-q_{h_i})^{L}.
    \end{align*}
    Since $1-t\leq e^{-t}$ for $t\in \R$ and $\frac{1}{eL}$ is the maximum of the function $x e^{-Lx}$ for $x \geq 0$, we conclude that
    \begin{align*}
        \E[c_{\vv q}(\L_1(\H,X_{\vv p}))]
        &\leq  \left(\sum_{i=1}^r q_{h_i}\right)\cdot e^{-L\sum_{i=1}^r q_{h_i}}\leq \frac{1}{eL}.
    \end{align*}
\end{proof}

\subsection{Proof of Theorem 1}

%We now prove Theorem 1 which was stated above.
\begin{proof}[Proof of Theorem 1]
    Let $k=\lfloor{\log (2\ell)}\rfloor\geq 1$ such that $\ell\in [2^{k-1},2^k).$ Let $\{L_i\}_{i\geq 1}$ be a sequence of reals such that $L_i\geq 1$ for all $i\geq 1.$ Let $r_x^{(i)}=1-(1-q_x)^{L_i}$ for each $i\geq 1$ and $x\in X.$ Finally, let $\H_0:=\H$ and inductively define $\H_i=\S_{2^{k-i}}(\H_{i-1},W_i)$ for $i\in [k]$ where $W_i\sim \mu_{\mathbf{r}^{(k+1-i)}}$ are independent. Let $W:=\bigcup_{i=1}^k W_i.$ Note that $W\sim \mu_{\mathbf{p}}$ where $p_x=1-(1-q_x)^{L_1+\cdots + L_k}$ for each $x\in X$. We aim to choose $\{L_i\}_{i\geq 1}$ that minimizes $\sum_{i=1}^k L_i$ and satisfies $\Pr[W\in\langle\H\rangle]>1/2.$
    
    By construction, $\H_i$ is $(2^{k-i}-1)$-bounded for $i\in \{0,1,\dots,k\}$. In particular, $\H_k$ is $0$-bounded. Thus, $\H_{k}=\emptyset$ or $\H_{k}=\{\emptyset\}$. If the event $E=\{\H_{k}=\{\emptyset\}\}$ occurs, this means that some fragment of a set in $\H$ eventually became $\emptyset$ after iterating $k$ times, which means that $W$ contains this set.  Depending on whether $E$ holds, the $\vv q$-cost at the end is $c_{\vv q}(\H_{k}) = 1$ or $0$.  Hence, if $c_{\vv q}(\H_k)>0$ then $E$ occurs and thus $W\in\langle \H\rangle$. 
    
    By subadditivity of $c_{\vv q},$ for each $i\in [k]$ we have 
    \begin{align*}
       c_{\vv q}(\H_i)&\geq c_{\vv q}(\FM(\H_{i-1},W_i))-c_{\vv q}(\L_{2^{k-i}}(\H_{i-1},W_i))\geq c_{\vv q}(\H_{i-1})-c_{\vv q}(\L_{2^{k-i}}(\H_{i-1},W_i)).
    \end{align*}
    To check the second inequality, note that if $\FM(\H_{i-1},W_i)\subseteq \langle \G\rangle,$ then $\H_{i-1}\subseteq \langle \G\rangle$ since $\FM(\H_{i-1},W_i)$ contains a fragment of every set in $\H_{i-1}$. Iterating through $i\in [k],$ we have 
    \begin{align*}
        c_{\vv q}(\H_k)\geq c_{\vv q}(\H)-\sum_{i=1}^k c_{\vv q}(\L_{2^{k-i}}(\H_{i-1},W_i)).
    \end{align*}
    Let $Z:=\sum_{i=1}^k c_{\vv q}(\L_{2^{k-i}}(\H_{i-1},W_i))$. 
    If $Z<c_{\vv q}(\H),$ then we have $c_{\vv q}(\H_k)>0$ and thus $W\in\langle \H\rangle.$ Hence, we will choose $\{L_i\}_{i\geq 1}$ such that $\P(Z<c_{\vv q}(\H))>1/2$ while minimizing $\sum_{i=1}^k L_i.$ 
    
    Let $m_i=2^{k-i}$ for $i\in [k].$ Applying Lemma 1 for $i \in [k-1]$ with parameters $\ell=2m_i-1 = 2^{k-i+1}-1$ and $m=m_i=2^{k-i}$, we have
    \begin{align*}
        \E[c_{\vv q}(\L_{m_i}(\H_{i-1},W_i))\mid \H_{i-1}] &\leq \Pr[W_i\in \langle \H_{i-1}\rangle\mid \H_{i-1}]\sum_{j=m_i}^{2m_i-1} \frac{1}{(L_{k+1-i})^j}\binom{2m_i-1}{j}.
    \end{align*}
    Taking expectations over $\H_{i-1}$ for both sides and noting $$\Pr[W_i\in\langle \H_{i-1}\rangle] \leq \Pr[W_1\cup\cdots \cup W_i\in \langle\H\rangle]\leq \Pr[W\in\langle \H\rangle],$$ we have
    \begin{align*}
        \E[c_{\vv q}(\L_{2^{k-i}}(\H_{i-1},W_i))]
        &\leq
        \Pr[W\in\langle \H\rangle]\sum_{j=m_i}^{2m_i-1} \frac{1}{(L_{k+1-i})^j}\binom{2m_i-1}{j}.
    \end{align*}
    If $i=k,$ since $\H_{k-1}$ is $1$-bounded, by Lemma 2 we have $\E[c_{\vv q}(\L_1(\H_{k-1},W_k))]\leq 1/(eL_1).$
    Therefore, we have
    \begin{align*}
        \E[Z]&\leq \frac{1}{eL_1} + \sum_{i=1}^{k-1}\Pr[W\in\langle \H\rangle]\sum_{j=m_i}^{2m_i-1} \frac{1}{(L_{k+1-i})^j}\binom{2m_i-1}{j}\\
        &\leq \frac{1}{eL_1} +\Pr[W\in\langle \H\rangle]\sum_{i=2}^{\infty}\sum_{j=2^{i-1}}^{2^i-1}\frac{1}{(L_i)^j}\binom{2^i-1}{j}.
    \end{align*}
    
    By Markov's inequality, $\Pr[Z \geq c_\vv{q}(\H)] \leq \frac{\E[Z]}{c_\vv{q}(\H)}$. If $c_{\vv q}(\H)\leq \P[W\in\langle \H\rangle],$ then $\P[W\in\langle \H\rangle]>1/2$ by the hypothesis. Otherwise, we use $c_{\vv q}(\H) > 1/2$ and $c_{\vv q}(\H)>\P[W\in\langle \H\rangle]$ to deduce 
    \begin{align}
        \Pr[Z\geq c_{\vv q}(\H)] 
        &\leq\frac{2}{eL_1} + \sum_{i=2}^{\infty}\sum_{j=2^{i-1}}^{2^i-1}\frac{1}{(L_i)^j}\binom{2^i-1}{j}.\label{eq:key}
    \end{align}
    Hence, if we choose $\{L_i\}_{i\geq 1}$ such that the right-hand side of \eqref{eq:key} is less than $1/2$, then we always have $\Pr[W\in\langle \H\rangle]>1/2$ as desired. In particular, it is easy to see that $L_i = L$ for $L > 4$ large enough makes \eqref{eq:key} less than $1/2$, which is sufficient to prove the Kahn-Kalai conjecture.

    In order to obtain a good quantitative bound, we want to find a choice of $\{L_i\}_{i\geq 1}$ that also minimizes $\sum_{i=1}^k L_i.$ 
    The bound of Theorem 1 is obtained by $L_i=5$ if $i\leq 7$ and $L_i=4$ if $i>7.$ This gives $W\sim \mu_{\vv p}$ where $p_x\leq 1-(1-q_x)^{4\lfloor\log(2\ell)\rfloor + 7}$ which is our desired random set. We now show that 
    \begin{align*}
        \frac{2}{5e} + \sum_{i=2}^{7}\sum_{j=2^{i-1}}^{2^i-1}\frac{1}{5^j}\binom{2^i-1}{j} + \sum_{i=8}^{\infty}\sum_{j=2^{i-1}}^{2^i-1}\frac{1}{4^j}\binom{2^i-1}{j}<\frac{1}{2}.
    \end{align*}
    To see this, first note that 
    \begin{align*}
        \sum_{j=2^{i-1}}^{2^i-1}\frac{1}{4^j}\binom{2^i-1}{j} &\leq \binom{2^i-1}{2^{i-1}} \sum_{j=2^{i-1}}^{\infty} \frac{1}{4^j} = \binom{2^i-1}{2^{i-1}}\frac{1}{4^{2^{i-1}}(1-1/4)}.
    \end{align*}
    Using $\binom{2n-1}{n}=\binom{2n}{n}/2 < 2^{2n-1}/\sqrt{\pi n},$ we obtain
    \begin{align*}
        \sum_{i=8}^{\infty}\sum_{j=2^{i-1}}^{2^i-1}\frac{1}{4^j}\binom{2^i-1}{j} &\leq \sum_{i=8}^\infty \frac{2^{2^i-1}}{\sqrt{\pi 2^{i-1}}4^{2^{i-1}}(1-1/4)}
        = \sum_{i=8}^\infty \frac{2}{3\sqrt{\pi 2^{i-1}}}=\frac{1+\sqrt{2}}{12\sqrt{\pi}}.
    \end{align*}
    Bringing everything together, we have
    \begin{align*}
        \frac{2}{5e} + \sum_{i=2}^{7}\sum_{j=2^{i-1}}^{2^i-1}\frac{1}{5^j}\binom{2^i-1}{j} + \frac{1+\sqrt{2}}{12\sqrt{\pi}} < \frac{1}{2}
    \end{align*}
    which can be verified by computer. This concludes our proof.
\end{proof}

\paragraph{Remark.}
The additive constant $7$ in $4\lfloor \log(2\ell)\rfloor + 7$ could be optimized slightly further, since one can use constants smaller than $5$ to define $L_i$ when $i\leq 7$.

\subsection{Concluding Remarks}

Alternative bounds can be obtained by other choices of $\{L_i\}_{i\geq 1}$ in the proof of Theorem 1. First, assume that $L_i=L$ for all $i\geq 1$ and recall $\eqref{eq:key}$ from the proof of Theorem 1. As we mentioned, it is easy to see that as $L \to \infty$, the right-hand side converges to $0$, which is % In particular, we need $L \geq 4$ for $L^j$ to dominate the binomial coefficient $\binom{2^i-1}{j}$; for $L > 4$, the summation converges exponentially. Hence there is a universal constant $L \geq 4$ such that the right-hand side of \eqref{eq:key} is less than $1/2.$ This is 
sufficient to prove the Kahn-Kalai conjecture. 

In fact, setting $L=4.5$ works, which can be verified (by computer) similar to the final calculations in our proof of Theorem 1. Indeed, if $L=4.5,$ then $W\sim \mu_{\vv p}$ where $p_x=1-(1-q_x)^{4.5\lfloor\log(2\ell)\rfloor}$, which implies the result $p_c\leq 4.5q_c\log(2\ell)$ mentioned in the introduction.

To obtain a concrete bound which can be verified easily by hand, we can set $L=6$. Again recall \eqref{eq:key} in the proof of Theorem 1. Using Lemma 1 for \textit{all} $i\in [k]$ instead of just $i\in [k-1]$ and letting $L_i=L=6$ for all $i\geq 1,$ we have (if $c_{\vv q}(\H)>\P[W\in\langle \H\rangle]$) the inequality
\begin{align*}
        \Pr[Z\geq c_{\vv p}(\H)]&\leq \sum_{i=1}^{\infty}\sum_{j=2^{i-1}}^{2^i-1}\frac{1}{L^j}\binom{2^i-1}{j}.
    \end{align*}
Note that the term $L^{-j}$ appears exactly once in the summation above for each $j\geq 1.$ For $j\in [4],$ take the exact coefficients of $L^{-j}.$ For $j\geq 5,$ we use $\binom{2^i-1}{j}\leq \binom{2j-1}{j}\leq 2^{2j-1}$ to get
\begin{align*}
        \Pr[Z\geq c_{\vv p}(\H)]
        &\leq \frac{1}{L} + \frac{3}{L^2} + \frac{1}{L^3} + \frac{35}{L^4} + \sum_{j=5}^\infty \frac{2^{2j-1}}{L^j}\\
        &= \frac{1}{L} + \frac{3}{L^2} + \frac{1}{L^3} + \frac{35}{L^4} + \frac{(4/L)^5}{2(1-4/L)} = \frac{23}{48} < \frac{1}{2}
\end{align*}
as desired. This gives $p_c\leq 6q_c\log(2\ell)$.

As long as we define the sequence of hypergraphs $\H_i$ to be $(2^{k-i}-1)$-bounded, the proof does not go through for a multiplicative constant $L$ smaller than $4$. However, it is possible to achieve a multiplicative constant slightly less than $4$ with a factor slightly different from $2$ to define the iterative process. 
 E.g. \cite{VT} achieves a constant $L \simeq 3.998$ for the Kahn-Kalai conjecture and \cite{Huy} cites an improvement to $L \simeq 3.995$. These improvements come with a larger additive term, as the summation analogous to \eqref{eq:key} does not converge very quickly for $L < 4$. We do not pursue this here. We note that the bound proved in \cite{VT} is roughly $p_c \leq ((3.998 + \delta) \log \ell + 1000 \log (1/\delta)) q_c$, which beats our bound of $p_c \leq 4 q_c \log (7 \ell)$ asymptotically, but our bound is stronger for $\ell < 2^{1,000,000}$.
It appears that a new idea would be required to push the multiplicative constant significantly below $4$.

\end{document}